\documentclass[leqno,10pt]{amsart}
\usepackage{amsfonts,amsmath,amssymb}
\usepackage[all]{xy}
\usepackage[dvips]{graphicx}

\newcommand{\co}{\colon\thinspace}

\newcommand{\TC}{\mathbf{TC}}

\newcommand{\wgt}{\mathrm{wgt}}

\newcommand{\cat}{\mathrm{cat}}
\newcommand{\gen}{\mathfrak{genus}}
\newtheorem{thm}{Theorem}[section]

\newtheorem{prop}[thm]{Proposition}
\newtheorem{defn}[thm]{Definition}

\newtheorem{exam}[thm]{Example}

\newtheorem{rem}[thm]{Remark}
\newtheorem{questions}[thm]{Questions}
\newtheorem{problem}[thm]{Problem}

\begin{document}

\keywords{Topological complexity, Schwarz genus, weights of cohomology classes, Massey products}
\subjclass[2000]{Primary 55M99, 55S30; Secondary 68T40.}
\thanks{The author was supported by a grant from the UK EPSRC}

\title[Topological complexity]{Topological complexity of motion planning\\
and Massey products}

\author[M. ~Grant]{Mark Grant}
\address{Department of Mathematical Sciences, Durham University\\
South Road, Durham, DH1 3LE, UK}
\email{mark.grant@durham.ac.uk}

\maketitle

\begin{abstract} We employ Massey products to find sharper lower
  bounds for the Schwarz genus of a fibration than those previously known. In particular we give examples of non-formal spaces
$X$ for which the topological complexity $\TC(X)$ (defined to be the genus of the free path fibration on $X$) is greater than the
zero-divisors cup-length plus one.
\end{abstract}

\section{Introduction.}

Motion planning is a fundamental area of research in
Robotics. A {\em motion planning algorithm} for a given mechanical
system $S$ is a function which assigns to each ordered pair $(A,B)$ of
physical states of $S$ a continuous motion of $S$ starting at $A$ and ending at
$B$. We may regard the admissible physical states of $S$ as being parameterised
by the points of a topological space $X$ (the {\em configuration
space} of the system) such that
motions of the system correspond to continuous paths $\gamma\co
[0,1]=I\to X$. A motion planning algorithm for the system is then a
section $s\co X\times X\to X^I$ (not necessarily continuous) of the free path fibration
\begin{equation}
\pi_X\co X^I\to X\times X,\quad \pi_X(\gamma)=(\gamma(0),\gamma(1)).
\end{equation}
The minimum number of domains of continuity of such a section $s$
provides a measure of the complexity of
the motion planning problem in $X$. This observation led M.\ Farber in
[Far1], [Far2] to consider a new numerical homotopy
invariant, called the {\em topological
  complexity} of the configuration space $X$ and denoted $\TC(X)$,
which may be defined to be
the Schwarz genus ([Sch], see Section 2) of the fibration
(1). The invariant $\TC(X)$ is a close relative of the
Lusternik-Schnirelmann category $\cat(X)$, and although independent the two
satisfy the inequalities $\cat(X)\leq\TC(X)\leq\cat (X\times
X)\leq 2\cdot\cat(X)-1$. We refer
the reader to [Far3] for an excellent survey of results
in this area.

 Computing $\TC(X)$ for a given $X$ can be an extremely difficult
 task (for example, by the main result of [FTY] the topological complexity of real projective space
 $\TC (\mathbf{R}P^n)$ for $n\neq 1,3,7$ equals one plus the smallest dimension of
 Euclidean space into which $\mathbf{R}P^n$ immerses). As in the case
 of LS-category one applies cohomology theory to find computable lower bounds. One such lower bound for $\TC(X)$, which only requires knowledge of the cohomology algebra of $X$, is given in [Far1]. If $X$ is a space of finite type
 and $\mathbf{F}$ is a field, there is an isomorphism of graded algebras $H^*(X\times X;\mathbf{F})\cong H^*(X;\mathbf{F})\otimes H^*(X,\mathbf{F})$,
where the product on the right is given by
$$(\alpha\otimes \beta)(\gamma\otimes
\delta)=(-1)^{|\beta||\gamma|}\alpha\gamma\otimes\beta\delta.$$
The cup product map $\cup\co H^*(X;\mathbf{F})\otimes
H^*(X;\mathbf{F})\to H^*(X;\mathbf{F})$ is a ring homomorphism, whose
kernel is the ideal of {\em zero-divisors}. The {\em zero-divisors
  cup-length over $\mathbf{F}$} is the number of factors in the
longest non-trivial product of zero-divisors. Then $\TC(X)$ is greater
than the zero-divisors
  cup-length over $\mathbf{F}$, for
any field of coefficients $\mathbf{F}$.

 In a recent paper of Farber and the author [FG2], stable cohomology operations are utilised to obtain sharper lower bounds for $\TC$ than the
 zero-divisors cup-length. In this article we investigate the effects
 of Massey products on topological complexity. The key notion is that of {\em weight of a cohomology
 class with respect to a fibration}, first defined in [FG1], which
generalises the {\em category weight} of Y.\ Rudyak [Rud] and J.\
Strom [Str] (which in turn are
 refinements of the original notion of category weight due to E.\ Fadell and S.\ F.\ Husseini
 [FH]). In Section 2 we recall some properties of this weight, and show how
 classes of high category weight can lead to classes of high
 weight with respect to the free path fibration. In Section 3 we briefly
 review Massey products, and show how they may be
 used to estimate the Schwarz genus of a fibration, generalising a
 result of Rudyak ([Rud], Theorem 4.4). In the final Section 4
 we give examples of non-formal spaces where non-zero Massey products can be employed to find better lower bounds for $\TC$ than the zero-divisors cup-length.

The author wishes to thank Michael Farber, Thomas Kahl and Sergey Yuzvinsky for stimulating discussions regarding this work.

\section{Weights of cohomology classes with respect to a fibration.}
 In this Section we recall the definition of {\em weight of a
   cohomology class with respect to a fibration} from [FG1]. We also give an alternative
 characterisation of weight in terms of fibred joins (Proposition \ref{join}),
 and show that classes with high category weight may lead to classes
 with high weight with respect to the path fibration $\pi_X$ (Theorem
 \ref{main}). {\em In this article, all spaces are assumed to be
   path-connected and of finite type. Unless specified otherwise, coefficients for
   cohomology are taken in an arbitrary commutative ring $R$ with unit.}

Let $p\co E\to B$ be a fibration. The {\em Schwarz genus} of $p$,
denoted $\gen(p)$, is defined to be the minimum $k$ such that $B$
may be covered by open subsets $U_1,\ldots ,U_k$, on each of which
$p$ admits a continuous local section (a map $s_i\co U_i\to E$ such that $p\circ
s_i$ is the identity map on $U_i$).

The concept of genus was defined and thoroughly studied by A.\ S.\ Schwarz
[Sch]; it is also called {\em sectional category} in the modern
literature. It generalises the Lusternik-Schnirelmann category, in the
following sense. Let $(X,x_0)$ be a pointed space and
let $p_X\co P_0X\to X$ be the Serre path fibration on $X$, where
$P_0X=\{\gamma\co I\to X\mid\gamma(0)=x_0\}$ and
$p_X(\gamma)=\gamma(1)$. Then $\gen(p_X)=\cat(X)$ (here we do not
normalise, so that for us $\cat(Y)=1$ if $Y$ is contractible). Other notable
applications of the genus include the works of S.\ Smale [Sma]
and V.\ A.\ Vassilliev [Vas1], [Vas2] on the complexity of
algorithms for finding roots of polynomial equations, and applications
to the embedding problem for topological manifolds (see Chapter VII of [Sch] and the references therein).

Another important application of the genus (and the one with which we are most concerned here) is to the motion planning problem in Robotics. For any space $X$ let $X^I$ denote the space of paths in $X$ (with no restrictions on end-points) with the compact-open topology. The {\em topological complexity} of $X$ is defined by $\TC(X)=\gen(\pi_X)$, where
$$\pi_X\co X^I\to X\times X,\quad\pi_X(\gamma)=(\gamma(0),\gamma(1))$$
is the free path fibration. As mentioned in the Introduction, the number $\TC(X)$ provides a measure of the complexity of the motion planning problem for a system with configuration space homotopy equivalent to $X$. More details can be found in Farber [Far1], [Far2], [Far3].

A useful cohomological lower bound for the genus of an arbitrary fibration $p\co E\to B$ was given by Schwarz.
\begin{thm}[Schwarz, \cite{[Sch]}, Theorem 4]\label{Schwarz}
Suppose there are classes $u_1,\ldots,u_\ell\in H^*(B)$ such that $p^*(u_i)=0$ for $i=1,\ldots, \ell$ and the product $u_1\cdots u_\ell$ is non-zero. Then $\gen(p)>\ell$.
\end{thm}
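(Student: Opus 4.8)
The plan is to prove the contrapositive: assuming $\gen(p)\leq\ell$, I will show that the product $u_1\cdots u_\ell$ must vanish. By the definition of genus, $\gen(p)\leq\ell$ means that $B$ admits an open cover $U_1,\ldots,U_\ell$ such that on each $U_i$ there is a continuous local section $s_i\co U_i\to E$ of $p$, that is, $p\circ s_i=\mathrm{id}_{U_i}$.

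The first key step is to observe that each $u_i$ restricts trivially to $U_i$. Writing $j_i\co U_i\hookrightarrow B$ for the inclusion, the relation $p\circ s_i=\mathrm{id}_{U_i}$ says precisely that $j_i=p\circ s_i$, so on cohomology $j_i^*=s_i^*\circ p^*$. Since by hypothesis $p^*(u_i)=0$, we obtain $j_i^*(u_i)=s_i^*(p^*(u_i))=0$; that is, $u_i|_{U_i}=0$. From the long exact sequence of the pair $(B,U_i)$ it then follows that $u_i$ lifts to a relative class $\bar u_i\in H^*(B,U_i)$ mapping to $u_i$ under the natural homomorphism $H^*(B,U_i)\to H^*(B)$.

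The second key step is to multiply these relative lifts using the relative cup product
$$H^*(B,U_1)\otimes\cdots\otimes H^*(B,U_\ell)\longrightarrow H^*(B,U_1\cup\cdots\cup U_\ell),$$
which is compatible with the absolute product via the restriction maps. Forming $\bar u_1\cdots\bar u_\ell$ therefore yields a class in $H^*(B,U_1\cup\cdots\cup U_\ell)$ whose image in $H^*(B)$ is exactly $u_1\cdots u_\ell$. But the sets $U_i$ cover $B$, so $U_1\cup\cdots\cup U_\ell=B$ and $H^*(B,B)=0$; hence $\bar u_1\cdots\bar u_\ell=0$, and consequently $u_1\cdots u_\ell=0$, contradicting the hypothesis. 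This contradiction forces $\gen(p)>\ell$.

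I expect the main technical point to be the construction and naturality of the relative cup product $H^*(B,A)\otimes H^*(B,A')\to H^*(B,A\cup A')$, which in general requires $\{A,A'\}$ to be an excisive couple so that excision applies. Since the $U_i$ are open, this condition is automatically satisfied, so the issue is a matter of invoking the correct form of the relative product rather than a genuine obstacle. The remaining steps---factoring the inclusion through the section and reading off the lift from the long exact sequence---are routine.
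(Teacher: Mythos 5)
Your proof is correct. Note that the paper itself offers no proof of Theorem~\ref{Schwarz}: it is quoted as a known result of Schwarz ([Sch], Theorem 4), so there is no internal argument to compare yours against. What you have written is the standard proof of the cup-length lower bound for sectional category, and every step is sound: the factorisation $j_i^* = s_i^*\circ p^*$ through the local section, the lift of $u_i$ to $H^*(B,U_i)$ via the exact sequence of the pair, and the relative cup product landing in $H^*(B,U_1\cup\cdots\cup U_\ell)=H^*(B,B)=0$. You also correctly flag the one technical point, namely that the relative product requires an excisive couple, which openness of the $U_i$ guarantees. It is worth observing that the same mechanism appears inside the paper: the proof of Theorem~\ref{rudyak} lifts cocycles representing $\alpha$, $\beta$, $\gamma$ to relative cochains in $C^*(B,D)$ and $C^*(B,C)$ and multiplies them into $C^*(B,C\cup D)=0$, so your argument is precisely the degenerate (no Massey product, pure cup product) case of the technique the author uses there; this consistency is a good sign that you have reconstructed the intended argument.
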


In this Theorem one may also use local coefficients, or other
cohomology theories, but we will not do so here. Note that for the
Serre path fibration $p_X$ Theorem \ref{Schwarz} gives the classical lower bound
for $\cat(X)$ in terms of the cup-length of $\tilde{H}^*(X)$ (since
$P_0X$ is contractible). For the free path fibration $\pi_X\co X^I\to
X\times X$ and coefficients in a field, Theorem \ref{Schwarz} gives the lower
bound for $\TC(X)$ in terms of zero-divisors cup-length [Far1]
described in the Introduction. This is because $\pi_X$ is
homotopically equivalent to the diagonal map $\triangle\co X\to
X\times X$.

In [FH] it was observed by Fadell and Husseini that some
indecomposables in $\tilde{H}^*(X)$ carry more weight than others in
the cup-length estimate for $\cat(X)$ (an homogenous element $u$ in a
graded algebra is called {\em indecomposable} if it cannot be written
as a sum of products $u=\sum v_iw_i$ where the dimensions of each
$v_i$, $w_i$ are strictly less than that of $u$). Their definition of
{\em category weight} of a cohomology class was later refined by
Rudyak [Rud] and by Strom [Str]. The notion of weight was generalised to an arbitrary fibration $p\co E\to B$ in papers [FG1], [FG2].
\begin{defn}{\em 
The }weight {\em of a non-zero cohomology class $u\in H^*(B)$ with respect to $p$, denoted $\wgt_p(u)$, is defined by
\[\wgt_p(u)=\sup\{ k\mid f^*(u)=0\textrm{ for all maps }f\co A\to
X\textrm{ with }\gen(f^*p)\leq k\}.\]
Here $f^*p$ denotes the pull-back fibration of $p$ along $f$.}
\end{defn}
\begin{rem}{\em
The (strict) category weight of a class
  $u\in H^*(X)$ is defined in [Rud] to be
$$\wgt(u)=\sup\{ k\mid f^*(u)=0\textrm{ for all maps }f\co A\to
X\textrm{ with }\cat(f)\leq k\}$$
(recall that $\cat(f)$ is the smallest $n$ such that $A$ admits an open cover
$U_1,\ldots ,U_n$ with $f|_{U_i}$ null-homotopic for all $i$).
 It is not difficult to see that $\wgt(u)=\wgt_{p_X}(u)$, the weight
 of $u$ with respect to the Serre path fibration.}
\end{rem}
An alternative characterisation of weight may be given, in terms of fibred joins. Recall that the {\em $k$-fold iterated fibred join} of a fibration $p\co E\to B$ with fibre $F$ is a fibration $p(k)\co E(k)\to B$ with fibre $\ast^kF$, the $k$-fold join of $F$ with itself. The domain space $E(k)$ has underlying set the formal linear sums
\[\tilde{e}=e_1t_1+e_2t_2+\ldots +e_kt_k,\quad e_i\in E,\,t_i\in [0,1],\quad\sum t_i=1,\quad p(e_1)=\dots =p(e_k),
\]
with the understanding that two such sums $\tilde{e}$ and $\tilde{e'}$
are equal if and only if $e_i=e'_i$ whenever $t_i>0$. Its topology is
defined to be the smallest topology such that the co-ordinate maps
\[t_i\co E(k)\to [0,1],\quad e_i\co t_i^{-1}(0,1]\to E\]
are all continuous. The projection $p(k)\co E(k)\to B$ is defined by
\[p(k)(e_1t_1+\cdots +e_kt_k)=p(e_1)=\dots =p(e_k).\]
Note that $p(1)\co E(1)\to B$ is exactly $p\co E\to B$. Schwarz proved ([Sch], Theorem 3) that {\em $\gen(p)\leq k$ if and only if $p(k)$ has a section}.
\begin{prop}\label{join}
For any non-zero $u\in H^*(B)$ we have
\[\wgt_p(u)=\sup\{ k\mid p(k)^*(u)=0\}.\]
In particular, $\wgt_p(u)\geq 1$ if and only if $p^*(u)=0$.
\end{prop}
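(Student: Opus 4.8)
The plan is to prove the two inequalities $\wgt_p(u)\le\sup\{k\mid p(k)^*(u)=0\}$ and $\wgt_p(u)\ge\sup\{k\mid p(k)^*(u)=0\}$ by showing, for each $k$, that the defining condition of the left-hand side at level $k$ is equivalent to the condition $p(k)^*(u)=0$. Both quantities are suprema over down-closed sets of integers (enlarging $k$ only strengthens each condition), so it suffices to compare the conditions level by level. The single tool that makes everything work is Schwarz's criterion quoted above: $\gen(q)\le k$ if and only if the $k$-fold fibred join $q(k)$ admits a section. The one technical ingredient I would establish first is the naturality of the fibred join under pull-back, namely that for any map $f\co A\to B$ there is a canonical isomorphism of fibrations $(f^*p)(k)\cong f^*(p(k))$ over $A$. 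This is immediate from the explicit coordinate description: a point of $(f^*p)(k)$ is a formal sum $(a,e_1)t_1+\cdots+(a,e_k)t_k$ with $p(e_1)=\cdots=p(e_k)=f(a)$, which corresponds exactly to a pair $(a,\tilde{e})$ with $f(a)=p(k)(\tilde{e})$; one checks the two descriptions carry the same topology.

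For the inequality $\ge$ (that is, $p(k)^*(u)=0$ forces $\wgt_p(u)\ge k$), I would take any $f\co A\to B$ with $\gen(f^*p)\le k$ and show $f^*(u)=0$. By Schwarz's criterion $(f^*p)(k)$ has a section, and via the naturality isomorphism this is a section $s$ of $f^*(p(k))$. Composing $s$ with the canonical bundle map $f^*(E(k))\to E(k)$ covering $f$ produces a lift $g\co A\to E(k)$ of $f$ through $p(k)$, that is $p(k)\circ g=f$. Hence $f^*(u)=g^*\bigl(p(k)^*(u)\bigr)=0$, as required.

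For the reverse inequality $\le$ I would test the definition of weight on the specific map $f=p(k)\co E(k)\to B$. The pull-back $f^*(p(k))=p(k)^*(p(k))$ has an obvious section coming from the identity lift $\mathrm{id}\co E(k)\to E(k)$ of $p(k)$ through itself, so by naturality $(p(k)^*p)(k)$ has a section, and Schwarz's criterion gives $\gen(p(k)^*p)\le k$. Thus if $\wgt_p(u)\ge k$, applying the definition of weight to this particular $f$ yields $p(k)^*(u)=f^*(u)=0$, which is exactly the desired condition. Combining the two inequalities gives the stated equality, and the final remark follows upon setting $k=1$ and recalling that $p(1)=p$.

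The step I expect to be the main obstacle is the naturality isomorphism $(f^*p)(k)\cong f^*(p(k))$, since it must be verified at the level of the non-standard topology on the fibred join spelled out above rather than taken for granted; once this compatibility of pull-back with fibred join is in hand, both directions reduce to routine applications of Schwarz's section criterion.
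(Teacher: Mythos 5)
Your proof is correct, and its overall strategy matches the paper's: both directions come down to the naturality of the fibred join under pullback together with Schwarz's section criterion, and for the inequality $\wgt_p(u)\leq\sup\{k\mid p(k)^*(u)=0\}$ you test the definition of weight against exactly the same map $f=p(k)\co E(k)\to B$ that the paper uses. The difference is in packaging. For the direction ``$p(k)^*(u)=0$ implies $\wgt_p(u)\geq k$'' the paper simply cites Proposition 34 of [FG1], whose proof is essentially the argument you wrote out (the homeomorphism $(f^*p)(k)\cong f^*(p(k))$, Schwarz's criterion to produce a section of $(f^*p)(k)$, then a cohomological factorisation of $f^*(u)$ through $p(k)^*(u)$). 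For the reverse direction, however, the paper invokes neither Schwarz's criterion nor the naturality homeomorphism: it directly exhibits the open cover $U_i=t_i^{-1}(0,1]$ of $E(k)$ and the explicit local sections $s_i(e_1t_1+\cdots+e_kt_k)=(e_1t_1+\cdots+e_kt_k,\,e_i)$ of the pullback fibration $p(k)^*p$, giving $\gen(p(k)^*p)\leq k$ straight from the definition of genus. Your version of this step --- the tautological diagonal section of $p(k)^*(p(k))$, transported through the naturality isomorphism and fed into the easy direction of Schwarz's join theorem --- unwinds to precisely that cover and those sections, so the geometry is identical; what the paper's explicit construction buys is that this half of the argument is self-contained and avoids the very point you flag as the main obstacle (verifying $(f^*p)(k)\cong f^*(p(k))$ with the join topology), whereas your route buys conceptual symmetry, making both inequalities formal consequences of the same two lemmas. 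One caveat: your use of the ``hard'' direction of Schwarz's criterion ($\gen\leq k$ forces a section of the $k$-fold join) carries whatever hypotheses on the base that theorem requires, but since the proof cited by the paper from [FG1] uses the same step, this creates no gap relative to the paper itself.
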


\begin{proof}
 As is shown in Proposition 34 of [FG1], if $p(k)^*(u)=0$ then
$\wgt_p(u)\geq k$. Hence $\wgt_p(u)\geq\sup\{ k\mid p(k)^*(u)=0\}.$

Now suppose that $\wgt_p(u)= k$, and consider the pull-back
fibration $p(k)^* p$. It has base space $E(k)$ and total space
\[\{(e_1t_1+\ldots +e_kt_k,e)\in E(k)\times E\mid p(e_1)=\dots =p(e_k)=p(e)\}.\]
The open sets $U_i=t_i^{-1}(0,1]$, $i=1,\ldots ,k$ cover $E(k)$, and
on each there is a section $s_i$ of $p(k)^* p$ given by
\[s_i(e_1t_1+\ldots +e_kt_k)=(e_1t_1+\ldots +e_kt_k,e_i).\] Hence
$\gen(p(k)^* p)\leq k$, and so $p(k)^*(u)=0$.
\end{proof}

\begin{thm}[\cite{[FG1]}, Theorem 33]\label{weight}
Suppose there are classes $u_1,\ldots,u_\ell\in H^*(B)$ whose product $u_1\cdots u_\ell$ is non-zero. Then
$$\gen(p)>\wgt_p(u_1\cdots u_\ell)\geq\sum_{i=1}^{\ell}\wgt_p(u_i).$$
\end{thm}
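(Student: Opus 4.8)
The plan is to establish the two inequalities separately. The strict inequality $\gen(p)>\wgt_p(u_1\cdots u_\ell)$ is essentially formal, flowing directly from the definition of weight together with the hypothesis $u_1\cdots u_\ell\neq 0$; the superadditivity $\wgt_p(u_1\cdots u_\ell)\geq\sum_i\wgt_p(u_i)$ is the substantive content, which I would prove by induction after reducing to the two-factor case and invoking a relative cup product.

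First I would dispose of the strict inequality. Write $u=u_1\cdots u_\ell$ and $w=\wgt_p(u)$. The set $\{k\mid f^*(u)=0\textrm{ whenever }\gen(f^*p)\leq k\}$ is an initial segment of $\Z_{\geq 0}$, since decreasing $k$ only restricts the family of test maps $f$; thus when $w$ is finite it is attained, so every $f$ with $\gen(f^*p)\leq w$ satisfies $f^*(u)=0$. Testing against $f=\mathrm{id}_B$, whose pull-back $\mathrm{id}_B^*p$ is $p$ itself and which satisfies $\mathrm{id}_B^*(u)=u\neq 0$ by hypothesis, we see that $\mathrm{id}_B$ cannot lie in the tested family at level $w$, forcing $\gen(p)>w$. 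This is the first inequality.

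For the second inequality it suffices, by an evident induction on $\ell$, to prove $\wgt_p(uv)\geq\wgt_p(u)+\wgt_p(v)$ for two classes $u,v$. Set $m=\wgt_p(u)$ and $n=\wgt_p(v)$ (both finite; the case of infinite weight follows by using arbitrarily large finite surrogates), and consider any $f\co A\to B$ with $\gen(f^*p)\leq m+n$; by definition it is enough to show $f^*(uv)=0$. Choose an open cover $A=U_1\cup\cdots\cup U_{m+n}$ over which $f^*p$ admits sections, and group it as $A_1=U_1\cup\cdots\cup U_m$ and $A_2=U_{m+1}\cup\cdots\cup U_{m+n}$, so that $A=A_1\cup A_2$ with $A_1,A_2$ open. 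Since $(f|_{A_1})^*p$ equals the restriction $(f^*p)|_{A_1}$, which has genus at most $m\leq\wgt_p(u)$, the definition of weight forces $f^*(u)|_{A_1}=0$; symmetrically $f^*(v)|_{A_2}=0$.

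Finally I would pass to relative cohomology. As $f^*(u)$ restricts to zero on the open set $A_1$, the exact sequence of the pair $(A,A_1)$ provides a lift $\bar u\in H^*(A,A_1)$ of $f^*(u)$, and likewise a lift $\bar v\in H^*(A,A_2)$ of $f^*(v)$. Because $\{A_1,A_2\}$ is an excisive couple, the relative cup product is defined and produces $\bar u\cup\bar v\in H^*(A,A_1\cup A_2)=H^*(A,A)=0$, which maps to $f^*(u)\cup f^*(v)=f^*(uv)$ under the natural homomorphism $H^*(A,A_1\cup A_2)\to H^*(A)$. Hence $f^*(uv)=0$, yielding $\wgt_p(uv)\geq m+n$ and, by induction, the full inequality. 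I expect the only real obstacle to lie in this last step: one must verify that $\{A_1,A_2\}$ is excisive (immediate, both being open), that the lifts $\bar u,\bar v$ exist, and above all that the relative pairing $H^*(A,A_1)\otimes H^*(A,A_2)\to H^*(A,A_1\cup A_2)$ is compatible with the ordinary cup product after restriction to $H^*(A)$. The remaining ingredients---the reduction to two factors, the regrouping of the cover, and the identity $(f|_{A_i})^*p=(f^*p)|_{A_i}$---are routine.
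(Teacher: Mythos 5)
The paper gives no proof of this theorem at all --- it is quoted from [FG1] (Theorem 33) --- so the only possible comparison is with the cited reference, whose proof is the same standard argument you give. Your proposal is correct: the strict inequality $\gen(p)>\wgt_p(u_1\cdots u_\ell)$ follows by testing $f=\mathrm{id}_B$ against the definition of weight (using that the set of admissible $k$ is downward closed, so a finite supremum is attained), and the superadditivity follows by splitting an open cover witnessing $\gen(f^*p)\leq m+n$ into two subcovers $A_1,A_2$, observing $\gen\bigl((f|_{A_i})^*p\bigr)\leq m$ (resp.\ $n$), lifting $f^*(u)$ and $f^*(v)$ to relative classes, and multiplying them into $H^*(A,A_1\cup A_2)=H^*(A,A)=0$ via the relative cup product; the only cosmetic difference from the cited proof is that there the cover is partitioned into $\ell$ groups at once rather than reducing to the two-factor case by induction.
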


Theorem \ref{weight} may give a better lower bound for $\gen(p)$ than Theorem \ref{Schwarz},
provided one can find indecomposables $u\in H^*(B)$ with
$\wgt_p(u)>1$. Fadell and Husseini achieved this in the case of
category weight, using stable cohomology operations ([FH]
Theorem 3.12, see also Corollary 4.7 of [Rud]). An analogous result for $\TC$ was obtained by the authors in [FG2], where stable cohomology operations are used to find indecomposable zero-divisors $z\in H^*(X\times X)$ with $\wgt_{\pi_X}(z)>1$, thus allowing the computation of $\TC$ of various lens spaces. Rudyak has shown ([Rud], Corollary 4.6) that if $u\in H^*(X)$ is a Massey product then $\wgt(u)>1$ (the definition of Massey's triple product will be recalled in Section 3). To conclude this Section we show how classes of high category weight can lead to zero-divisors with high weight with respect to $\pi_X$.
\begin{thm}\label{main}
Let $X$ be an $r$-connected space, $r\geq 1$. Suppose that $u\in
H^\ell(X;\mathbf{F})$ has $\wgt(u)\geq k\geq 1$, where $k(r+1)\leq\ell
<(k+1)(r+1)$ and $\mathbf{F}$ is a field. Then there exists an element $\phi(u)\in H^\ell(X\times
X;\mathbf{F})$, of the form
\begin{equation}
\phi(u)=1\times u+\theta(u),\quad\theta(u)\in\bigoplus_{\stackrel{i+j=\ell}{i>0}}H^i(X;\mathbf{F})\otimes H^j(X;\mathbf{F}),
\end{equation}
which has $\wgt_{\pi_X}(\phi(u))\geq k$. If the cup products
$H^i(X;\mathbf{F})\otimes H^{\ell -i}(X;\mathbf{F})\to H^{\ell}(X;\mathbf{F})$
for $0<i<\ell$ all vanish, then
\[\phi(u)=\overline{u}=1\times u -u\times 1.\]
\end{thm}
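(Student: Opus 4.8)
The plan is to construct the class $\phi(u)$ explicitly and then verify its weight by pulling back along the iterated fibred join $\pi_X(k)$, using Proposition \ref{join}. The central idea should be to exploit the fact that $\TC$ is the genus of a fibration homotopy equivalent to the diagonal $\triangle\co X\to X\times X$, so that the relevant category-weight data for $u$ on $X$ can be transported to weight data for a zero-divisor on $X\times X$. Let me think about how the connectivity hypotheses enter: the constraint $k(r+1)\le\ell<(k+1)(r+1)$ is exactly the range in which an $\ell$-dimensional class on an $r$-connected space can have category weight as large as $k$ but no larger, and it should control which tensor terms $H^i\otimes H^j$ can appear in $\theta(u)$.

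**First I would** make precise the relationship between $\pi_X$ and the diagonal. Since $\pi_X$ is fibre-homotopy equivalent to $\triangle$, we have $\wgt_{\pi_X}=\wgt_{\triangle}$ on $H^*(X\times X)$, and the iterated fibred join $\pi_X(k)$ corresponds to the $k$-fold join construction over the diagonal. I would then set $\phi(u)$ to be a class of the stated form $1\times u+\theta(u)$ whose restriction to the diagonal $\triangle^*(\phi(u))=u\cup(\text{correction})$ records that it is a zero-divisor, i.e. $\triangle^*(1\times u)=u$ must be cancelled by $\triangle^*(\theta(u))$ so that $\phi(u)$ lies in the kernel of $\cup$. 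The construction of $\theta(u)$ is where the hypothesis $\wgt(u)\ge k$ must be invoked: using Proposition \ref{join} in the form $p_X(k)^*(u)=0$ for the Serre path fibration, I would lift this vanishing to produce, via the join structure and the K\"unneth isomorphism over the field $\mathbf F$, a class on $X\times X$ pulling back to zero under $\pi_X(k)$.

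**The key steps in order** would be: (i) identify $E(k)$ for $\pi_X(k)$ with a join of path-spaces and relate $\pi_X(k)^*$ to $p_X(k)^*$ via the diagonal equivalence; (ii) use $\wgt(u)\ge k$, i.e. $p_X(k)^*(u)=0$, to obtain a cochain-level nullhomotopy; (iii) transport this nullhomotopy through the K\"unneth decomposition to define $\theta(u)\in\bigoplus_{i+j=\ell,\,i>0}H^i\otimes H^j$ so that $\pi_X(k)^*(1\times u+\theta(u))=0$; and (iv) conclude $\wgt_{\pi_X}(\phi(u))\ge k$ by Proposition \ref{join}. For the final clause, I would observe that when all intermediate cup products $H^i\otimes H^{\ell-i}\to H^\ell$ vanish for $0<i<\ell$, the only surviving candidate for a correction term forcing $\phi(u)$ into the zero-divisor ideal is $-u\times 1$, since every mixed K\"unneth term would have to map to a nonzero decomposable in $H^\ell(X)$ under $\triangle^*$ but cannot; hence $\theta(u)=-u\times 1$ and $\phi(u)=\overline u$.

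**The hard part will be** step (iii): producing $\theta(u)$ canonically rather than merely existentially, and showing the correction term genuinely lives in the mixed bidegree range $i>0$ (the connectivity bound $\ell<(k+1)(r+1)$ should rule out contributions from degrees too low to support weight $k$, while $k(r+1)\le\ell$ guarantees enough room for the join construction to carry the $k$-fold vanishing). I expect the technical core to be a careful cochain-level argument interchanging the K\"unneth isomorphism with the join structure map, where the field hypothesis on $\mathbf F$ is essential for K\"unneth to be an isomorphism of algebras. Verifying that the resulting $\phi(u)$ indeed has the advertised leading term $1\times u$ and lies in $\ker\cup$ should then be a direct consequence of how $\theta(u)$ was built.
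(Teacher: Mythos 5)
Your proposal correctly reduces the problem to producing a class $\phi(u)=1\times u+\theta(u)$ with $\pi_X(k)^*(\phi(u))=0$ and then quoting Proposition \ref{join}, but step (iii) of your plan --- which you yourself flag as ``the hard part'' --- \emph{is} the theorem, and the route you sketch for it does not supply the missing mechanism. Knowing $\wgt(u)\geq k$, i.e.\ $p_X(k)^*(u)=0$, is information living over $X$: the fibration $p_X(k)$ is the restriction of $\pi_X(k)$ to a slice of the base, via the map $\iota\co X\to X\times X$, $\iota(x)=(x_0,x)$. To manufacture from this a class defined on all of $X\times X$ that dies under $\pi_X(k)^*$, one needs a device that transports fibrewise vanishing from the slice to the whole base; a cochain-level nullhomotopy of a representative of $u$ pushed through the K\"unneth isomorphism gives no such device, since there is no map in sight carrying cochains on $P_0X(k)$ (or on $X$) to cochains on $X\times X$. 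Note also that you repeatedly invoke ``the diagonal equivalence'' to relate $\pi_X(k)$ to $p_X(k)$: this is a misdirection, because the pullback of $\pi_X$ along $\triangle$ is the free loop fibration, not the based path fibration. The correct comparison map is $\iota$, and the diagonal enters only in the final clause.

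The paper's mechanism, entirely absent from your plan, is a comparison of Leray--Serre spectral sequences. Let $(E_*,d_*)$ and $(\bar E_*,\bar d_*)$ be the spectral sequences of $p_X(k)$ and $\pi_X(k)$; they share the fibre $\ast^k\Omega X$, which by Milnor's lemma is $\bigl(k(r+1)-2\bigr)$-connected. The hypotheses $k(r+1)\leq\ell<(k+1)(r+1)$, together with $r$-connectivity of $X$ and $X\times X$, force every differential that could reach position $(\ell,0)$ to vanish except the transgression $d_\ell\co E_\ell^{0,\ell-1}\to E_\ell^{\ell,0}$; since $p_X(k)^*(u)=0$ says exactly that $u$ dies under the edge homomorphism, it follows that $u=d_\ell(v)$ for some fibre class $v$. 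One then \emph{defines} $\phi(u):=\bar d_\ell(v)\in\bar E^{\ell,0}_\ell=H^\ell(X\times X)$: being in the image of $\bar d_\ell$, it is killed by the edge homomorphism of $\pi_X(k)$, i.e.\ $\pi_X(k)^*\phi(u)=0$, giving $\wgt_{\pi_X}(\phi(u))\geq k$; and naturality of spectral sequences along $\iota$ gives $\iota^*\phi(u)=d_\ell(v)=u$, which is precisely the normal form (2). So the dimensional hypotheses act by making $u$ transgressive --- not, as you suggest, by ``controlling which tensor terms $H^i\otimes H^j$ can appear in $\theta(u)$.'' (Your argument for the final clause, combining the zero-divisor property $\triangle^*\phi(u)=0$ with the vanishing of the intermediate cup products, does agree with the paper's; but without the transgression construction there is no $\phi(u)$ for it to apply to.)
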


\begin{proof}
 The $k$-fold fibred joins of the Serre fibration $p_X$ and the
free path fibration $\pi_X$ are related by the following diagram,
\begin{equation}\xymatrix{
\ast^k\Omega X \ar@{=}[r] \ar[d] & \ast^k\Omega X \ar[d]\\
P_0X(k) \ar[r] \ar[d]_{p_X(k)} &  X^I(k)\ar[d]^{\pi_X(k)} \\
X \ar[r]^{\iota} & X\times X,}
\end{equation}
where the bottom square is a pull-back and the map $\iota\co X\to X\times X$ is given by $\iota(x)=(x_0,x)$. Let $(E_r,d_r)$ and $(\bar{E}_r,\bar{d}_r)$ denote the Leray-Serre spectral sequences of $p_X(k)$ and $\pi_X(k)$ respectively. The class $u\in H^\ell(X)=E^{\ell,0}_2$ has $\wgt(u)\geq k$, and therefore by Proposition \ref{join} lies in the kernel of $p_X(k)^*\co H^\ell(X)\to H^\ell(P_0X(k))$, which is known to correspond to the edge homomorphism
\[
H^\ell(X)=E_2^{\ell,0}\twoheadrightarrow E_{\infty}^{\ell,0}\hookrightarrow H^\ell(P_0X(k))
\]
(see for example [Whi] p. 649).

Since $X$ is $r$-connected, the based loop space $\Omega X$ is $(r-1)$-connected; hence by Lemma 2.3 of [Mil] the common fibre $\ast^k\Omega X$ is $(rk+k-2)$-connected. For dimensional reasons $u$ must therefore be in the image of the differential
\[
d_\ell\co H^{\ell-1}(\ast^k\Omega X)=E^{0,\ell-1}_\ell\to E^{\ell, 0}_\ell=H^\ell(X).
\]
Let $v$ be in $H^{\ell-1}(\ast^k\Omega X)=E^{0,\ell-1}_\ell=\bar{E}^{0,\ell-1}_\ell$ with $d_\ell(v)=u$. We set
$$\phi(u)=\bar{d}_\ell(v)\in\bar{E}^{\ell,0}_\ell=\bar{E}^{\ell,0}_2 =H^\ell(X\times X).$$
By naturality of spectral sequences and using diagram (3) we see that
$$\iota^*(\phi(u))=\iota^*(\bar{d}_\ell(v))=d_\ell(v)=u,$$
and hence $\phi(u)$ is of the form (2). Since $\phi(u)$ is in the
image of the differential $\bar{d}_\ell$ it is in the kernel of the
edge homomorphism
\[
H^\ell(X\times X)=\bar{E}_2^{\ell,0}\twoheadrightarrow \bar{E}_{\infty}^{\ell,0}\hookrightarrow H^\ell(X^I(k)),
\]
which corresponds to $\pi_X(k)^*\co H^\ell(X\times X)\to
H^\ell(X^I(k))$. Hence $\wgt_{\pi_X}(\phi(u))\geq k$, proving the first
statement.

Now note that $\wgt_{\pi_X}(\phi(u))\geq k\geq 1$ implies that
$\phi(u)$ is a zero-divisor. Hence
$\triangle^*(\phi(u))=u+\triangle^*(\theta(u))=0$ where
$\triangle^*\co H^*(X)\otimes H^*(X)\to H^*(X)$ is the cup product
map, and the second statement follows.
\end{proof}

\section{Massey products.}
In this Section we recall some definitions and results concerning
Massey products and show how they may be used to estimate the Schwarz
genus of a fibration, generalising a result of Rudyak ([Rud],
Theorem 4.4). We consider only the triple product [UM], [Mas], which is a secondary cohomology operation of three
variables, but much of what we say may be generalised to higher order or
matric Massey products (see [Kra] and [May] for
definitions).

Let $X$ be a topological space. The singular cochain complex of $X$ with coefficients in $R$, denoted $C^*(X)$, is a DGA over $R$ with cochain multiplication
$\bullet\co C^*(X)\otimes C^*(X)\to C^*(X)$
defined in the usual way and differential $d$ of degree $+1$ satisfying $d(a\bullet b)=da\bullet b+(-1)^{|a|}a\bullet db$. Given cohomology classes $\alpha
,\beta ,\gamma\in H^*(X)$ of dimensions $p$, $q$ and $r$ such that
$\alpha\beta=0=\beta\gamma$, their {\em Massey product} is a subset $$\langle\alpha ,\beta ,\gamma
\rangle\subseteq H^{p+q+r-1}(X)$$ defined as follows. Let $a,b,c\in C^*(X)$ be cocycles representing $\alpha$, $\beta$ and $\gamma$
respectively. Since $\alpha\beta=0$ there is a cocycle $\mu\in
C^{p+q-1}(X)$ with $d\mu =a\bullet b$. Similarly, since $\beta\gamma=0$ there is a cocycle $\lambda\in
C^{q+r-1}(X)$ with $d\lambda =b\bullet c$. The cochain
$a\bullet\lambda+(-1)^{p+1}\mu\bullet c$ is a cocycle which therefore
represents a class in $H^{p+q+r-1}(X)$. The Massey product
$\langle\alpha,\beta,\gamma\rangle$ is the set of all cohomology classes arising
in this way,
$$\langle\alpha,\beta,\gamma\rangle=\{[a\bullet\lambda+(-1)^{p+1}\mu\bullet c]\in
H^{p+q+r-1}(X)\mid d\mu=a\bullet b\textrm{
  and }d\lambda=b\bullet c\}.$$
Elements of the above Massey product differ by elements of the subgroup
$$\alpha H^{q+r-1}(X)+H^{p+q-1}(X)\gamma\subseteq
H^{p+q+r-1}(X),$$
which is termed the {\em indeterminacy} of $\langle\alpha,\beta,\gamma\rangle$. Hence
one may regard $\langle\alpha,\beta,\gamma\rangle$ as an element of the quotient
group of $H^{p+q+r-1}(X)$ modulo this indeterminacy. Note that if all
cup products in $H^*(X)$ are zero the indeterminacy vanishes. We will
say that $\langle\alpha,\beta,\gamma\rangle$ is {\em non-zero} if $0\notin
\langle\alpha,\beta,\gamma\rangle$.

The next result is based on Theorem 4.4 of [Rud].
\begin{thm}\label{rudyak}
 Let $p\co E\to B$ be a fibration, and let
  $\alpha,\beta,\gamma\in H^*(B)$ be cohomology classes. If the Massey
  product $\langle\alpha,\beta,\gamma\rangle$ is defined and non-zero, then
$$\gen(p)>\wgt_p(\beta)+\min \{\wgt_p(\alpha),\wgt_p(\gamma)\}.$$
\end{thm}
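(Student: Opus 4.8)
The plan is to argue by contradiction, using Schwarz's join criterion together with the naturality and cochain description of Massey products. Set $s=\wgt_p(\beta)$ and $t=\min\{\wgt_p(\alpha),\wgt_p(\gamma)\}$, and suppose for contradiction that $\gen(p)\le s+t$. By Schwarz's theorem the fibred join $p(s+t)$ then admits a section $\sigma\co B\to E(s+t)$, so that $\sigma^*\circ p(s+t)^*=\mathrm{id}$ on $H^*(B)$; this section is the device that will eventually let me push an upstairs vanishing statement back down to $B$.

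Next I would use the associativity of fibred joins, $E(s+t)\cong E(s)\ast_B E(t)$, which equips $E(s+t)$ with the join coordinate $u=t_1+\cdots+t_s$ and an open cover $E(s+t)=W_1\cup W_2$, where $W_1=\{u>0\}$ and $W_2=\{u<1\}$ carry fibrewise projections to $E(s)$ and $E(t)$ respectively, compatible with all three projections to $B$. Since $\wgt_p(\beta)\ge s$, Proposition \ref{join} (together with the evident monotonicity of the vanishing of $p(k)^*$ in $k$, coming from the fibrewise inclusions $E(k)\hookrightarrow E(k')$) gives $p(s)^*(\beta)=0$, whence $p(s+t)^*(\beta)$ restricts to $0$ on $W_1$; symmetrically $p(s+t)^*(\alpha)$ and $p(s+t)^*(\gamma)$ restrict to $0$ on $W_2$. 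Working in the cochain complex of $\{W_1,W_2\}$-small simplices (which computes $H^*(E(s+t))$) and using that restriction of singular cochains is surjective, I can choose cocycle representatives of these three pulled-back classes that vanish identically on the appropriate open set. Their pairwise cup products then vanish on every small simplex, so the choice $\mu=\lambda=0$ exhibits $0\in\langle p(s+t)^*\alpha,p(s+t)^*\beta,p(s+t)^*\gamma\rangle$.

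The final step is the one I expect to be the main obstacle: converting this upstairs vanishing into a contradiction on $B$, in the presence of indeterminacy. Choosing any $w\in\langle\alpha,\beta,\gamma\rangle$, naturality of Massey products gives $p(s+t)^*(w)\in\langle p(s+t)^*\alpha,p(s+t)^*\beta,p(s+t)^*\gamma\rangle$; as this coset also contains $0$, the two differ by an element of the upstairs indeterminacy, so $p(s+t)^*(w)\in p(s+t)^*\alpha\cdot H^*(E(s+t))+H^*(E(s+t))\cdot p(s+t)^*\gamma$. The subtlety is that this indeterminacy ranges over all of $H^*(E(s+t))$ rather than the image of $p(s+t)^*$, so it does not by itself say that any representative downstairs vanishes. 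Here the section rescues the argument: applying the ring homomorphism $\sigma^*$ and using $\sigma^*\circ p(s+t)^*=\mathrm{id}$ collapses the upstairs indeterminacy into $\alpha\cdot H^*(B)+H^*(B)\cdot\gamma$, yielding $w\in\alpha\cdot H^*(B)+H^*(B)\cdot\gamma$. But this is exactly the indeterminacy of $\langle\alpha,\beta,\gamma\rangle$, so $0\in\langle\alpha,\beta,\gamma\rangle$, contradicting the hypothesis that the Massey product is non-zero. Hence $\gen(p)>s+t$, as claimed.
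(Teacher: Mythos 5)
Your argument is correct, but it takes a different route from the paper's, which never leaves the base. The paper argues directly from the definitions: assuming $\gen(p)\le k+\ell$ (where $k=\wgt_p(\beta)$ and $\ell=\min\{\wgt_p(\alpha),\wgt_p(\gamma)\}$), it writes $B=C\cup D$ with $C$ a union of $k$ section domains and $D$ a union of $\ell$ of them; since the restrictions of $p$ over $C$ and over $D$ have genus at most $k$ and $\ell$ respectively, the definition of weight immediately gives $\beta|_C=0$ and $\alpha|_D=\gamma|_D=0$, so one may choose representing cocycles $b$ vanishing on chains in $C$ and $a,c$ vanishing on chains in $D$; then $a\bullet b$ and $b\bullet c$ vanish at the cochain level, and the choice $\mu=\lambda=0$ exhibits $0\in\langle\alpha,\beta,\gamma\rangle$ on $B$ itself, with no mention of indeterminacy at all. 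You run the same two-set-cover trick, but upstairs on $E(s+t)$, entering through Proposition \ref{join}, Schwarz's section criterion, and the decomposition $E(s+t)\cong E(s)\ast_B E(t)$; this is what forces the descent problem you rightly identify as the crux, and your solution --- push $w\in\langle\alpha,\beta,\gamma\rangle$ up by naturality, observe that it lies in the upstairs indeterminacy because the upstairs product contains $0$, then apply $\sigma^*$ to collapse that indeterminacy into $\alpha H^*(B)+H^*(B)\gamma$, and conclude using the fact that a triple product is a full coset of its indeterminacy --- is valid. Comparing the two: the paper's proof is shorter, needs no joins and no section, and, because it exhibits $0$ directly as an element of the downstairs Massey product, it generalizes to higher-order and matric Massey products, whose value sets are in general not cosets of a subgroup (so your coset-based descent would not survive that generalization); on the other hand, your proof uses only the join-theoretic characterizations of genus and weight, and it is more scrupulous at the one delicate cochain-level point: the paper's assertion that $a\bullet b=0$ ``since $C^*(B,C\cup D)=0$'' tacitly requires replacing the singular cochain algebra by the quasi-isomorphic DGA of cochains on $\{C,D\}$-small chains (products of the relative cochains only vanish on small simplices), which is exactly the small-simplices device you make explicit.
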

\begin{proof}
Let $k=\wgt_p(\beta)$ and $\ell=\min
\{\wgt_p(\alpha),\wgt_p(\gamma)\}$. Assume that
$\langle\alpha,\beta,\gamma\rangle$ is defined, and that $\gen(p)\leq k+\ell$. This
means there exist open subsets $C_i$ for $i=1,\ldots ,k$ and $D_j$ for
$j=1,\ldots ,\ell$ of $B$ such that
$$C=\bigcup_{i=1}^{k}C_i,\quad D=\bigcup_{j=1}^{\ell}D_j,\qquad B=C\cup D,$$
and $p$ admits a local section on each $C_i$, $D_j$. From the definition of
weight it follows that $\beta |_C=0$ and $\alpha
|_D=0=\gamma |_D$. A cocycle $b$ which represents $\beta$ is
therefore the image of a cocycle $\tilde{b}\in
C^*(B,C)$ which vanishes on cycles in $C$, by the exact cohomology
sequence of the pair $(B,C)$. Similarly the cocycles $a$ and $c$
representing $\alpha$ and $\gamma$ are the images of cocycles
$\tilde{a},\tilde{c}\in C^*(B,D)$. A quick glance at the diagram
\begin{equation}\xymatrix{
C^*(B,C)\otimes C^*(B,D) \ar[r]^{\bullet} \ar[d] & C^*(B,C\cup D) \ar[d]\\
C^*(B)\otimes C^*(B) \ar[r]^{\bullet} & C^*(B)}
\end{equation}
given by naturality of cochain multiplication now shows that $a\bullet
b=0=b\bullet c$, since $C^*(B,C\cup D)=0$. It follows that the Massey
product $\langle\alpha,\beta,\gamma\rangle$ contains zero.
\end{proof}

In the next Section we will apply Theorem \ref{rudyak} to obtain lower bounds
for $\TC(X)$ sharper than the zero-divisors cup-length, for certain
spaces $X$. The next two Propositions gather some facts
about Massey products which are needed in the sequel.
\begin{prop}\label{multi}
{\bf (a) }(Linearity) If $\langle\alpha,\beta,\gamma \rangle$ and $\langle\alpha',\beta,\gamma \rangle$ are defined and $|\alpha|=|\alpha'|$, then $\langle\alpha+\alpha',\beta,\gamma\rangle$ is defined and $$\langle\alpha+\alpha',\beta,\gamma\rangle\subseteq \langle\alpha,\beta,\gamma\rangle+\langle\alpha',\beta,\gamma\rangle.$$
Similar statements hold in the second and third variables.

{\bf (b) }(Scalar multiplication) If $\langle\alpha,\beta,\gamma
\rangle$ is defined and $r\in R$, then $\langle
r\alpha,\beta,\gamma\rangle$ is defined and
$$r\langle\alpha,\beta,\gamma\rangle\subseteq \langle r\alpha,\beta,\gamma \rangle.$$
Similar statements hold in the second and third variables. If $u\in R$
is a unit, then
$$u\langle\alpha,\beta,\gamma\rangle=\langle
u\alpha,\beta,\gamma\rangle =\langle\alpha,u\beta,\gamma\rangle =\langle\alpha,\beta,u\gamma\rangle.$$

{\bf (c) }(Naturality) If $f\co Y\to X$ is a map, then
$$f^*\langle\alpha,\beta,\gamma\rangle\subseteq\langle f^*(\alpha),f^*(\beta),f^*(\gamma)\rangle.$$

{\bf (d) }(Internal products) If $\langle\alpha,\beta,\gamma\rangle$
is defined and $\alpha',\beta',\gamma'\in H^*(X)$ are arbitrary
cohomology classes, then
$\langle\alpha\alpha',\beta\beta',\gamma\gamma'\rangle$ is
defined. Furthermore, if the latter operation has zero indeterminacy
then
$$\langle\alpha,\beta,\gamma\rangle\alpha'\beta'\gamma'=\pm
\langle\alpha\alpha',\beta\beta'\gamma\gamma'\rangle.$$
(The similar relation
$\alpha\beta\gamma\langle\alpha',\beta',\gamma'\rangle=\pm\langle\alpha\alpha',\beta\beta'\gamma\gamma'\rangle$
holds when $\langle\alpha',\beta',\gamma'\rangle$ is defined and
$\alpha,\beta,\gamma$ are arbitrary.)

{\bf (e) }(External products) If $\langle\alpha_1,\beta_1,\gamma_1\rangle$ is defined in $H^*(X_1)$
and $\alpha_2,\beta_2,\gamma_2\in H^*(X_2)$ are arbitrary
cohomology classes, then
$\langle\alpha_1\times\alpha_2,\beta_1\times\beta_2,\gamma_1\times\gamma_2\rangle$
is defined in $H^*(X_1\times
X_2)$. Furthermore, if the latter has zero indeterminacy then
$$\langle\alpha_1,\beta_1,\gamma_1\rangle\times\alpha_2\beta_2\gamma_2=\pm\langle\alpha_1\times\alpha_2,\beta_1\times\beta_2,\gamma_1\times\gamma_2\rangle.$$
(The similar relation
$\alpha_1\beta_1\gamma_1\times\langle\alpha_2,\beta_2,\gamma_2\rangle=\pm\langle\alpha_1\times\alpha_2,\beta_1\times\beta_2,\gamma_1\times\gamma_2\rangle$
holds when $\langle\alpha_2,\beta_2,\gamma_2\rangle\subseteq H^*(X_2)$ is defined and
$\alpha_1,\beta_1,\gamma_1\in H^*(X_1)$ are arbitrary.)
\end{prop}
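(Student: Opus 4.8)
The plan is to verify each part directly at the level of singular cochains, using the explicit representative $w=a\bullet\lambda+(-1)^{p+1}\mu\bullet c$ of $\langle\alpha,\beta,\gamma\rangle$ coming from a defining system $(a,b,c,\mu,\lambda)$ with $d\mu=a\bullet b$ and $d\lambda=b\bullet c$. Parts (a)--(c) are quick consequences of the definition; the substance is in (d), and (e) will be deduced from (d) by naturality. In every part the ``defined'' assertions are immediate from multiplicativity of the cup product: for (a), $(\alpha+\alpha')\beta=\alpha\beta+\alpha'\beta=0$; for (b), $(r\alpha)\beta=r(\alpha\beta)=0$; for (c), $f^*\alpha\cdot f^*\beta=f^*(\alpha\beta)=0$; and for (d), $(\alpha\alpha')(\beta\beta')=\pm(\alpha\beta)(\alpha'\beta')=0$, with the analogous computation for the third pair in each case. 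So only the containment or product formula requires work.

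For (a) I would take a common defining system: cocycles $a,a',b,c$ and cochains $\mu,\mu',\lambda$ with $d\mu=a\bullet b$, $d\mu'=a'\bullet b$, $d\lambda=b\bullet c$. Then $d(\mu+\mu')=(a+a')\bullet b$, and the representative $(a+a')\bullet\lambda+(-1)^{p+1}(\mu+\mu')\bullet c$ splits as the sum of a representative of $\langle\alpha,\beta,\gamma\rangle$ and one of $\langle\alpha',\beta,\gamma\rangle$. An arbitrary representative of the left-hand side differs from this by a term in the indeterminacy, and one checks the indeterminacy of $\langle\alpha+\alpha',\beta,\gamma\rangle$ lies in the sum of the two separate indeterminacies, giving the inclusion. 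Part (b) is the same computation with $a,\mu$ (or $b,\lambda$, or $c$) scaled by $r$, so that $r\cdot w$ represents $\langle r\alpha,\beta,\gamma\rangle$; when $r=u$ is a unit the reverse inclusion follows by applying the first inclusion with $u^{-1}$, and relocating $u$ to any of the three slots is handled identically, yielding the asserted equalities. For (c) one uses that $f$ induces a map of differential graded algebras $f^\#\co C^*(X)\to C^*(Y)$; applying $f^\#$ to a defining system produces a defining system for $\langle f^*\alpha,f^*\beta,f^*\gamma\rangle$ whose representative is $f^\# w$, whence $f^*\langle\alpha,\beta,\gamma\rangle\subseteq\langle f^*\alpha,f^*\beta,f^*\gamma\rangle$.

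The crux is (d). Here I would build a defining system for $\langle\alpha\alpha',\beta\beta',\gamma\gamma'\rangle$ out of the given one for $\langle\alpha,\beta,\gamma\rangle$ together with cocycles $a',b',c'$ for the arbitrary classes, using $a\bullet a'$, $b\bullet b'$, $c\bullet c'$ as representatives of the products. The obstacle is that $\bullet$ is not commutative on cochains, so $(a\bullet a')\bullet(b\bullet b')$ is not literally $(a\bullet b)\bullet(a'\bullet b')$. I would repair this with the Steenrod $\smile_1$ product, whose defining property gives, for cocycles $x,y$, a relation $x\bullet y-(-1)^{|x||y|}y\bullet x=\pm d(x\smile_1 y)$. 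Commuting $a'$ past $b$ (and $b'$ past $c$) then exhibits $(a\bullet a')\bullet(b\bullet b')$ and $(b\bullet b')\bullet(c\bullet c')$ as coboundaries of explicit cochains $M=\pm\mu\bullet a'\bullet b'\pm a\bullet(a'\smile_1 b)\bullet b'$ and $L=\pm\lambda\bullet b'\bullet c'\pm b\bullet(b'\smile_1 c)\bullet c'$. Forming the resulting representative and expanding, the principal terms assemble into $\pm w\bullet a'\bullet b'\bullet c'$, a representative of $\langle\alpha,\beta,\gamma\rangle\,\alpha'\beta'\gamma'$, while the $\smile_1$-correction terms must be shown to contribute only a coboundary. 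This bookkeeping, tracking signs and verifying that the correction terms cancel or are exact, is where I expect the real difficulty to lie. Finally, the zero-indeterminacy hypothesis on $\langle\alpha\alpha',\beta\beta',\gamma\gamma'\rangle$ forces $(\text{indeterminacy of }\langle\alpha,\beta,\gamma\rangle)\cdot\alpha'\beta'\gamma'$ to vanish, since it lands inside that indeterminacy, so both sides are single-valued and the matched representatives upgrade to the stated equality of sets.

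With (d) in hand, (e) follows formally. Writing $\pi_i\co X_1\times X_2\to X_i$ for the projections, I would apply naturality (c) to $\pi_1$ to see that $\langle\pi_1^*\alpha_1,\pi_1^*\beta_1,\pi_1^*\gamma_1\rangle$ is defined in $H^*(X_1\times X_2)$ and contains $\pi_1^*\langle\alpha_1,\beta_1,\gamma_1\rangle$, then invoke (d) with these classes in the first three slots and $\pi_2^*\alpha_2,\pi_2^*\beta_2,\pi_2^*\gamma_2$ as the arbitrary classes. Since $\alpha_1\times\alpha_2=\pi_1^*\alpha_1\cdot\pi_2^*\alpha_2$, and similarly for $\beta$ and $\gamma$, part (d) identifies $\pm\langle\alpha_1\times\alpha_2,\beta_1\times\beta_2,\gamma_1\times\gamma_2\rangle$ with $\pi_1^*\langle\alpha_1,\beta_1,\gamma_1\rangle\cdot\pi_2^*(\alpha_2\beta_2\gamma_2)=\langle\alpha_1,\beta_1,\gamma_1\rangle\times\alpha_2\beta_2\gamma_2$; the zero-indeterminacy hypothesis again makes all products single-valued, so the possibly larger indeterminacy of $\langle\pi_1^*\alpha_1,\pi_1^*\beta_1,\pi_1^*\gamma_1\rangle$ is annihilated and the formula is unambiguous. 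The symmetric statement is obtained by interchanging the roles of the two factors.
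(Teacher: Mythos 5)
Your parts (a), (b), (c) and (e) follow essentially the same route as the paper: (a)--(c) are immediate from the definition, and (e) is deduced from (c) and (d) via the identities $\alpha_1\times\alpha_2=p_1^*(\alpha_1)\cup p_2^*(\alpha_2)$ and $(\alpha_1\times\alpha_2)\cup(\beta_1\times\beta_2)=(-1)^{|\alpha_2||\beta_1|}(\alpha_1\beta_1)\times(\alpha_2\beta_2)$, which is exactly the paper's argument. The problem is (d). The paper does not prove (d); it cites Corollary 7 of Kraines [Kra], where this is a theorem whose proof is a genuinely nontrivial cochain computation. You instead sketch a direct proof via Steenrod $\smile_1$ products and stop exactly where the real work begins, by your own admission: ``this bookkeeping \ldots is where I expect the real difficulty to lie.'' That unfinished bookkeeping \emph{is} the mathematical content of (d). As written, your proposal establishes only the easy half of (d) --- that $\langle\alpha\alpha',\beta\beta',\gamma\gamma'\rangle$ is defined, which needs nothing beyond the cohomology-level vanishing $(\alpha\alpha')(\beta\beta')=\pm(\alpha\beta)(\alpha'\beta')=0$ --- but not the product formula, and (e) then inherits the gap since it rests on (d).

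Moreover, the difficulty is worse than your sketch suggests. The commutation identity $x\bullet y-(-1)^{|x||y|}y\bullet x=\pm d(x\smile_1 y)$ holds for \emph{cocycles}; for general cochains it acquires the extra terms $\pm(dx)\smile_1 y\pm x\smile_1(dy)$. To assemble your ``principal terms'' $(a\bullet a')\bullet\lambda\bullet b'\bullet c'$ and $\mu\bullet a'\bullet b'\bullet c\bullet c'$ into $\pm w\bullet a'\bullet b'\bullet c'$ you must commute $a'$ past the non-cocycle $\lambda$ (with $d\lambda=b\bullet c$) and commute $c$ past $a'\bullet b'$; the former produces correction terms involving $b\bullet c$ that must be shown to cancel, with signs, against the $\smile_1$-terms hidden in your cochains $M$ and $L$. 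Verifying that cancellation is precisely what Kraines' proof accomplishes. So you should either carry out that computation in full, or do what the paper does and simply invoke Kraines [Kra], Corollary 7 (noting that the triple product is the special case of his higher products); in its present form the proposal has a genuine gap at its central step.
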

\begin{proof}
Properties {\bf (a), (b)} and {\bf (c)} follow immediately from the
definition. Property {\bf (d)} is Corollary 7 of Kraines [Kra]
(there higher Massey products are treated, of which the triple product
is a special case). Property {\bf (e)} follows from properties {\bf (c)} and {\bf (d)} together with the identities
$$(\alpha_1\times\alpha_2)\cup(\beta_1\times\beta_2)=(-1)^{|\alpha_2||\beta_1|}(\alpha_1\cup\beta_1)\times(\alpha_2\cup\beta_2),$$
$$\alpha_1\times\alpha_2=p_1^*(\alpha_1)\cup p_2^*(\alpha_2),$$
for all $\alpha_1,\beta_1\in H^*(X_1)$, $\alpha_2,\beta_2\in
H^*(X_2)$, where $p_i\co X_1\times X_2\to X_i$ is projection onto
$X_i$ for $i=1,2$.
\end{proof}
\begin{prop}\label{external}
Let $R=\mathbf{F}$ a field, and let $X_1$, $X_2$ be spaces of finite
type. Suppose that the Massey product
$$\theta=\langle\alpha_1\times\alpha_2,\beta_1\times\beta_2,\gamma_1\times\gamma_2\rangle\subseteq
H^*(X_1\times X_2;\mathbf{F})\cong H^*(X_1;\mathbf{F})\otimes
H^*(X_2;\mathbf{F})$$ is defined. If either
$\alpha_1\beta_1=\beta_2\gamma_2=0$ or
$\alpha_2\beta_2=\beta_1\gamma_1=0$ then $\theta$ contains the zero class.
\end{prop}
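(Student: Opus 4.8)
The plan is to prove the first case, in which $\alpha_1\beta_1=\beta_2\gamma_2=0$, by an explicit cochain computation; the second case then follows by interchanging the roles of $X_1$ and $X_2$. The essential tool is that the external cochain cross product $\times\co C^*(X_1)\otimes C^*(X_2)\to C^*(X_1\times X_2)$ (dual to the Alexander--Whitney map) is a morphism of differential graded algebras, where the source carries the tensor-product multiplication with its Koszul signs; over the field $\mathbf{F}$ and for spaces of finite type this morphism is moreover a quasi-isomorphism by the K\"unneth theorem, which is what gives meaning to $\theta$ as an element of $H^*(X_1;\mathbf{F})\otimes H^*(X_2;\mathbf{F})$. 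Since Massey products are natural with respect to DGA maps (the cochain argument underlying Proposition \ref{multi}(c) applies verbatim to any DGA morphism), it suffices to exhibit the zero class in the Massey product $\langle\alpha_1\otimes\alpha_2,\beta_1\otimes\beta_2,\gamma_1\otimes\gamma_2\rangle$ computed in the tensor-product DGA $C^*(X_1)\otimes C^*(X_2)$, and then push it forward along $\times$.

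First I would fix cocycle representatives $a_i,b_i,c_i\in C^*(X_i)$ of $\alpha_i,\beta_i,\gamma_i$. In the tensor DGA the two defining products factor as $(a_1\otimes a_2)\bullet(b_1\otimes b_2)=\pm(a_1\bullet b_1)\otimes(a_2\bullet b_2)$ and $(b_1\otimes b_2)\bullet(c_1\otimes c_2)=\pm(b_1\bullet c_1)\otimes(b_2\bullet c_2)$. Invoking the hypotheses of the first case, I choose $\mu_1\in C^*(X_1)$ with $d\mu_1=a_1\bullet b_1$ and $\lambda_2\in C^*(X_2)$ with $d\lambda_2=b_2\bullet c_2$. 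Because $a_2\bullet b_2$ and $b_1\bullet c_1$ are themselves cocycles, the elements $\mu_1\otimes(a_2\bullet b_2)$ and $(b_1\bullet c_1)\otimes\lambda_2$ serve, up to sign, as nullhomotopies $M$ and $L$ of the two products; that is, $dM=(a_1\otimes a_2)\bullet(b_1\otimes b_2)$ and $dL=(b_1\otimes b_2)\bullet(c_1\otimes c_2)$. This simultaneously confirms that the tensor Massey product is defined.

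The resulting representative is $z=(a_1\otimes a_2)\bullet L\pm M\bullet(c_1\otimes c_2)$, which expands as a linear combination of the two cochains $(a_1\bullet b_1\bullet c_1)\otimes(a_2\bullet\lambda_2)$ and $(\mu_1\bullet c_1)\otimes(a_2\bullet b_2\bullet c_2)$. The key computation is that this very combination is the coboundary of $W=(\mu_1\bullet c_1)\otimes(a_2\bullet\lambda_2)$: since $d(\mu_1\bullet c_1)=a_1\bullet b_1\bullet c_1$ and $d(a_2\bullet\lambda_2)=\pm\,a_2\bullet b_2\bullet c_2$, the Leibniz rule shows that $dW$ hits exactly these two terms, and the signs forced by the Massey convention match those produced by differentiating $W$. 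Hence $z=\pm dW$ is a coboundary, so $0$ lies in the tensor Massey product, and naturality of $\times$ yields $0\in\theta$. I expect the main obstacle to be the sign bookkeeping needed to match $z$ with $dW$ --- one must reconcile the Koszul signs of the tensor multiplication, the signs in the cross-product factorisations, and the $(-1)^{p+1}$ in the Massey convention --- with a secondary point being to justify the cochain-level multiplicativity and DGA-naturality at the chain level rather than merely their cohomological shadows recorded in Proposition \ref{multi}.
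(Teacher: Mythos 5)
Your core cochain computation is exactly the paper's: in the tensor DGA $(C^*(X_1)\otimes C^*(X_2),d_\otimes)$ the nullhomotopies $\mu_1\otimes(a_2\bullet b_2)$ and $(b_1\bullet c_1)\otimes\lambda_2$ define the triple product, and the resulting representative is the coboundary of $\pm(\mu_1\bullet c_1)\otimes(a_2\bullet\lambda_2)$; the paper does precisely this, with the signs made explicit. The genuine gap is in the bridge you build between $C^*(X_1\times X_2)$ and the tensor DGA. The cochain cross product $\times\co C^*(X_1)\otimes C^*(X_2)\to C^*(X_1\times X_2)$ dual to the Alexander--Whitney map is \emph{not} a morphism of DGAs: it is a chain map and a quasi-isomorphism, and it is multiplicative on cohomology, but strict cochain-level multiplicativity fails. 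Indeed, if it held, then applying the strictly multiplicative map $\triangle^*$ (which satisfies $\triangle^*(a\times b)=a\bullet b$ at the cochain level) to the identity $(a\times b)\bullet(c\times d)=\pm(a\bullet c)\times(b\bullet d)$ and taking $a=d=1$ would force $b\bullet c=\pm\, c\bullet b$ for all singular cochains, i.e.\ graded commutativity of the cochain cup product --- which is false (its failure is exactly what produces Steenrod operations). Since your appeal to ``naturality of Massey products under DGA maps'' needs a strict DGA morphism, the push-forward step collapses as written.

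The repair is the paper's route: use instead the Eilenberg--Zilber map $\mathrm{EZ}\co C^*(X_1\times X_2)\to C^*(X_1)\otimes C^*(X_2)$, dual to the shuffle map on chains; the shuffle map is a strict morphism of DG coalgebras with respect to the Alexander--Whitney diagonals, so $\mathrm{EZ}$ is a strict DGA quasi-isomorphism. Note, however, that this map points in the direction opposite to yours, so plain naturality now yields only $\mathrm{EZ}_*\theta\subseteq\langle\mathrm{EZ}_*(\cdots)\rangle$, i.e.\ ``$0\in\theta$ implies $0$ lies in the tensor Massey product'' --- the reverse of what you need. One must add that along a DGA quasi-isomorphism triple Massey products correspond \emph{exactly}: for instance, a triple product is a full coset of its indeterminacy subgroup, $\mathrm{EZ}_*$ is a ring isomorphism on cohomology carrying one indeterminacy onto the other, and an inclusion of cosets of the same subgroup is an equality. (This is what the paper's phrase ``we may compute Massey products in $H^*(X_1\times X_2)$ using the cochain complex $(C^*(X_1)\otimes C^*(X_2),d_\otimes)$'' is quietly invoking.) With that correspondence in place, your coboundary computation finishes the proof.
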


\begin{proof}
The Eilenberg-Zilber
Theorem gives a chain equivalence
$$\mathrm{EZ}\co C^*(X_1\times X_2)\to C^*(X_1)\otimes C^*(X_2)$$
which can be seen to be a mapping of DGA's (the
product and differential on the right hand side are given respectively by
$$(a\otimes b)(c\otimes d)=(-1)^{|b||c|}a\bullet c\otimes b\bullet
d,\quad d_{\otimes}(a\otimes b)=da\otimes b +(-1)^{|a|}a\otimes db,$$
where $\bullet$ denotes usual cochain multiplication in
$C^*(X_i)$). Hence we may compute Massey products in $H^*(X_1\times
X_2)$ using the cochain complex $(C^*(X_1)\otimes
C^*(X_2),d_{\otimes})$, and we find that $0\in \theta$ if and only
if
\begin{equation}
0\in \langle [a_1\otimes a_2],[b_1\otimes b_2],[c_1\otimes c_2]\rangle,
\end{equation}
where the $a_i$, $b_i$ and $c_i$ are cocycles
representing $\alpha_i$, $\beta_i$ and $\gamma_i$.

Suppose that $\alpha_1\beta_1=\beta_2\gamma_2=0$. Let $\mu'\in
C^*(X_1)$ and $\lambda'\in C^*(X_2)$ be cochains such that
$d\mu'=(-1)^{|b_1||a_2|}a_1\bullet b_1$ and
$d\lambda'=(-1)^hb_2\bullet c_2$, where $h=|c_1|(|b_2|-1)-|b_1|$. One may show
that the cochains $\mu=\mu'\otimes a_2\bullet b_2$ and
$\lambda=b_1\bullet c_1\otimes\lambda'$ satisfy
$$d_{\otimes}\mu =(a_1\otimes a_2)(b_1\otimes b_2),\quad
d_{\otimes}\lambda =(b_1\otimes b_2)(c_1\otimes c_2).$$
Hence the above Massey product (5) contains the class represented by the
cocycle
$$(a_1\otimes a_2)\lambda +(-1)^{|a_1|+|a_2|+1}\mu\otimes (c_1\otimes
c_2).$$
A quick calculation gives that this cocycle is the coboundary
$$d_{\otimes}((-1)^{|c_1||a_2|}\mu'\bullet c_1\otimes a_2\bullet
  \lambda')$$
and hence represents zero.

The proof that $\theta$ contains zero when
$\alpha_2\beta_2=\beta_1\gamma_1=0$ runs similarly.
\end{proof}

\section{Examples.}

We now present examples of non-formal spaces $X$ where non-zero Massey
products in $H^*(X)$ allow us to apply the results of previous
Sections to obtain better lower bounds for $\TC(X)$ than the
zero-divisors cup-length. In all our examples we consider cohomology
with coefficients in the field $\mathbf{Q}$ of rational numbers. If $u\in H^\ell(X)$ is a cohomology class, it will be convenient to
denote by $\overline{u}$ the class
$$\overline{u}=1\times u-u\times 1\in H^\ell(X\times X).$$
\begin{exam}\label{Whitehead}{\em 
Let $X=S^3_a\vee S^3_b\cup e^8\cup e^8$ be the space obtained from the
wedge of two copies of the 3-sphere by attaching 8-cells by means of
the iterated Whitehead products $[S^3_a,[S^3_a,S^3_b]]$ and
$[S^3_b,[S^3_a,S^3_b]]$. This is one of the simplest examples of a
simply-connected non-formal space. We will show that $\TC(X)=5$, while the zero-divisors cup-length is 2.

First we note that since $X$ is a $2$-connected, $8$-dimensional
CW-complex, Proposition 5.1 of [Jam] gives
$\cat(X)<\frac{8+1}{2+1}+1=4.$
Therefore $\cat(X)\leq 3$ (in fact $\cat(X)=3$; see below) and the
upper bound $\TC(X)\leq 2\cdot\cat(X)-1$ given by Theorem 5 of [Far1] gives
$\TC(X)\leq 5$.

Let $a,b\in H^3(X)$ be the generators corresponding to the
two spheres. It is known ([UM], Lemma 7) that the Massey products $\langle a,a,b\rangle$ and $\langle
b,a,b\rangle$ are non-zero linearly independent elements of $H^8(X)$
(the indeterminacy is zero, since cup products are trivial in $H^*(X)$ for dimensional reasons). Since
$\wgt(\langle a,a,b \rangle)\geq 2$ by [Rud] Theorem 4.6, we can
apply our Theorem \ref{main} with $r=k=2$ to conclude that
$\wgt_{\pi_X}(\overline{\langle a,a,b \rangle})\geq 2$. Similarly
$\wgt_{\pi_X}(\overline{\langle b,a,b \rangle})\geq 2$. Now since
$$ \overline{\langle a,a,b \rangle}\cdot\overline{\langle b,a,b \rangle}=-\langle a,a,b \rangle\times\langle b,a,b \rangle -\langle b,a,b \rangle\times\langle a,a,b \rangle\neq 0,$$
Theorem \ref{weight} gives $\TC(X)>4$, so $\TC(X)=5$.
}
\end{exam}
\begin{rem}{\em
 Example \ref{Whitehead} is also considered in paper [FGKV], where the authors construct an invariant $\mathbf{MTC}(X)$ which is a lower bound for $\TC(X)$ using an explicit semi-free model of the fibred join (see Example 6.7 there). One suspects that the results there are related to ours. The methods here appear to give stronger lower bounds, as well as being simpler and more widely applicable; for instance we may also treat non-simply-connected spaces, as in the next Example.}
 \end{rem}
\begin{exam}{\em
 Let $X=S^3-B$ be the link complement of the Borromean rings. In his seminal paper [Mas] Massey gave a rigorous proof that the Borromean rings link is not isotopic to the unlink, by exhibiting non-zero triple products in $H^*(X)$. By Alexander duality we have $H^1(X)=\mathbf{Q}^3$ and $H^2(X)=\mathbf{Q}^2$. The generators $u,v,w\in H^1(X)$ are represented by cocycles dual to the disks
spanned by each of the embedded circles. The cup product
structure in $H^*(X)$ is trivial, reflecting algebraically the fact that the linking number of each pair of circles is zero. However, the Massey
products $\langle u,v,w\rangle$ and $\langle u,w,v\rangle$ are
non-zero linearly independent elements of $H^2(X)$ ([Mas], Theorem 3.1).

We claim that the Massey product $\theta=\langle
\overline{u},-\overline{v}\overline{\langle
  u,w,v\rangle},\overline{w}\rangle$ of degree 4 is non-zero in $H^*(X\times X)=H^*(X)\otimes H^*(X)$. In fact
\begin{eqnarray*}
\theta &=&\langle 1\otimes u-u\otimes 1,\langle u,w,v\rangle\otimes v+v\otimes\langle u,w,v\rangle,1\otimes w-w\otimes 1\rangle \\
 &\subseteq &\langle 1\otimes u,\langle u,w,v\rangle\otimes v,1\otimes w\rangle -\langle u\otimes 1,\langle u,w,v\rangle\otimes v,1\otimes w\rangle \\
 & &+\langle u\otimes 1,v\otimes \langle u,w,v\rangle ,w\otimes 1\rangle - \langle 1\otimes u,v\otimes \langle u,w,v\rangle ,w\otimes 1\rangle\\
  & &+\langle 1\otimes u,v\otimes \langle u,w,v\rangle ,1\otimes w\rangle -\langle u\otimes 1,v\otimes \langle u,w,v\rangle ,1\otimes w\rangle\\
  & &+\langle u\otimes 1,\langle u,w,v\rangle\otimes v,w\otimes 1\rangle -\langle 1\otimes u,\langle u,w,v\rangle\otimes v,w\otimes 1\rangle\\
  &=&\pm\langle u,v,w\rangle\otimes\langle u,w,v\rangle\pm\langle u,w,v\rangle\otimes\langle u,v,w \rangle\neq 0.
  \end{eqnarray*}
  The inclusion follows from Proposition \ref{multi} {\bf (a)} and {\bf (b)}. To
  obtain the second equality, first observe that any Massey product
  $\langle\alpha,\beta,\gamma\rangle$ in $H^*(X)\otimes H^*(X)$ with
  $|\alpha|=|\gamma|=1$ and $|\beta|=3$ has indeterminacy zero. Those
  Massey products with positive sign now sum to give the right-hand side, by
  Proposition \ref{multi} {\bf (e)}. Those with negative sign are zero by
  Proposition \ref{external}.

Therefore Theorem \ref{rudyak} gives $\TC(X)>3$, while the zero-divisors
cup-length equals 2. Combined with the upper bound $\TC(X)\leq 2\cdot\cat(X)-1$ this
gives $\TC(X)=4$ or $5$. This is the first known example of an aspherical space for which $\TC$ is greater than zero-divisors cup-length plus one.}
\end{exam}
\begin{questions}
Is $\wgt_{\pi_X}(\overline{\langle
  u,w,v\rangle})=2$? (If so then Theorem \ref{rudyak} gives $\TC(X)>4$, and hence $\TC(X)=5$.) Is there a result analogous to Theorem \ref{main} for non-simply-connected spaces?
  \end{questions}
  \begin{problem}{\em
  Give an expression for the topological complexity of a given knot or link complement in terms of known invariants.}
  \end{problem}


\begin{exam}{\em
 Let $\xi$ be a $2m$-dimensional vector bundle over
  $S^{m}\times S^m$ whose Euler class $e(\xi)\in H^{2m}(S^m\times S^m)$ is
  non-zero (here $m\geq 2$). Let $X$ denote the total
  space of the unit sphere bundle of $\xi$ over
  $S^{m}\times S^{m}$. We will show that $\TC(X)\geq 6$ while the zero-divisors cup-length is 3.

If $m$ is even, the Sullivan minimal model for $X$ has the form $(\Lambda\{a,b,x,y,z\},d)$
where
\[
da=db=0,\quad dx=a^2,\quad dy=b^2,\quad dz=ab,
\]
and $|a|=|b|=m$ and $|x|=|y|=|z|=2m-1$. A basis for $H^*(X)$ is
therefore given by the elements $\alpha=[a],\beta=[b]\in H^{m}(X)$,
$u=[az-xb],v=[bz-ya]\in H^{3m-1}(X)$ and $\mu=[abz-ya^2]\in
H^{4m-1}(X)$. The only non-trivial cup-products are $\alpha
v=\mu=u\beta$.

If $m$ is odd the minimal model has the form $(\Lambda\{a,b,z\},d)$ where
\[
da=db=0,\quad dz=ab,
\]
and $|a|=|b|=m$ and $|z|=2m-1$. A basis for $H^*(X)$ is
given by the elements $\alpha=[a],\beta=[b]\in H^{m}(X)$,
$u=[az],v=[zb]\in H^{3m-1}(X)$ and $\mu=[azb]\in
H^{4m-1}(X)$, and again the only non-trivial products are $\alpha
v=\mu=u\beta$.

In both cases $u\in\langle\alpha,\alpha,\beta\rangle$ and
$v\in\langle\beta,\beta,\alpha\rangle$, and hence Theorem 4.6 of [Rud] gives
$\wgt(u)\geq 2$ and $\wgt(v)\geq 2$. Our Theorem \ref{main} now applies with
$k=2$, $r=m-1$ to give $\wgt_{\pi_X}(\overline{u})\geq 2$ and
$\wgt_{\pi_X}(\overline{v})\geq 2$. Now since
$$\overline{\alpha}\cdot\overline{u}\cdot\overline{v}=-u\times\mu\pm\mu\times
u\neq 0,$$
and $\wgt_{\pi_X}(\overline{\alpha})\geq 1$ as $\overline{\alpha}$ is
a zero-divisor, Theorem \ref{weight} gives $\TC(X)> 1+2+2=5$.

In general $\TC(X)=6$ or $7$, since $\cat(X)=4$ (see Example 4.9 of
[Rud]; recall that our definition of category differs from that of
[Rud] by one).}
\end{exam}

\end{document}